\newcommand{\sums}{\mathop{{\sum}^{*}}}
\newcommand{\mz}{\ensuremath{\mathbb Z}}
\newcommand{\mq}{\ensuremath{\mathbb Q}}
\newcommand{\half}{\ensuremath{ \frac{1}{2}}}
\newcommand{\thalf}{\tfrac12}
\newcommand{\ov}{\overline}
\theoremstyle{plain}		
	\newtheorem{mytheo}{Theorem}[section]
	\newtheorem{myprop}[mytheo]{Proposition}
     \newtheorem{mylemma}[mytheo]{Lemma}
	\newtheorem{myremark}[mytheo]{Remark}
\theoremstyle{remark}
\numberwithin{equation}{section}
\begin{document}

\title{Upper bounds for analytic ranks of elliptic curves over cyclotomic fields}

\author{Agniva Dasgupta}

\author{Rizwanur Khan}

\address{
Department of Mathematical Sciences\\ University of Texas at Dallas\\ Richardson, TX 75080-3021}
\email{agniva.dasgupta@utdallas.edu, rizwanur.khan@utdallas.edu}

\subjclass[2020]{11G05, 11M99, 11F11} 
\keywords{elliptic curves, analytic rank, Dirichlet characters, non-vanishing, $L$-functions}
\thanks{The authors were supported by the National Science Foundation grant DMS-2341239. The second author was also supported by NSF grant DMS-2344044.}

\begin{abstract}
Let $E$ be an elliptic curve defined over $\mq$. We show that the analytic rank of $E$ over the cyclotomic extension $\mathbb{Q}(e^{2\pi i/q})$ is bounded above by $q^{\frac{45}{52}+\varepsilon}$, as $q\to \infty$ through the primes. This improves the bound $q^{\frac{7}{8}+\varepsilon}$ established by Chinta.
 \end{abstract}

\maketitle

\section{Introduction}
\label{sec:intro}

\subsection{Statement of results}

Let $E$ be an elliptic curve defined over $\mq$. For number fields $K$, it is an interesting problem to study how the rank of $E(K)$, the group of $K$-rational points of $E$, varies as $K$ varies. This paper is concerned with the cyclotomic field extensions $K_q =\mathbb{Q}(e^{2\pi i/q})$, as $q\to \infty$ through the primes. 

The {\it analytic rank} of $E(K_q)$ is defined to be the order of vanishing at the central point of the $L$-function of $E$ over $K_q$, which by the Birch and Swinnerton-Dyer Conjecture is expected to equal the (algebraic) rank of $E(K_q)$. Let $L(E,s)$ denote the $L$-function of $E$ over $\mathbb{Q}$ and $L(E \otimes \chi, s)$ its twist by a Dirichlet character $\chi$, normalized so that the central point is $s=\half$. The $L$-function of $E$ over $K_q$ equals
\[
\prod_{\chi \bmod{q}} L(E \otimes \chi, s).
\]
 Given that $L(E \otimes \chi, \half)$ can vanish to order at most $O(\log q)$ (since there are $O(\log q)$ non-trivial zeros a bounded distance from the real axis), we have the trivial bound
\begin{align}
\label{rank-formula} \text{analytic rank of }E(K_q) = \sum_{\chi \bmod{q}} \  \underset{s=\frac12}{\text{ord}}  \ L(E \otimes \chi, s) \ll q\log q. 
\end{align}
Assuming the Generalized Riemann Hypothesis, Murty \cite{Murty} proved that the analytic rank is $O(q)$.  Chinta \cite{Chinta} proved a remarkable unconditional bound that saved a power of $q$. He showed that
\[
\text{analytic rank of }E(K_q) \ll q^{\frac78+\varepsilon},
\]
and this has remained unimproved until now. Our main theorem is
\begin{mytheo}
\label{thm:ellipticcurves}
    Let $E$ be an elliptic curve defined over $\mq$. Let $K_q$ be the the cyclotomic field obtained by adjoining the $q^{\text{th}}$ roots of unity to $\mq$, where $q$ is a prime. For any $\varepsilon>0$ and $q$ sufficiently large, we have that
    \begin{equation*}
        \text{analytic rank of }E(K_q) \ll q^{\frac{45}{52}+\varepsilon}.
    \end{equation*}
\end{mytheo}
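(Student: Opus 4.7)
Since each order of vanishing $r(\chi) := \operatorname{ord}_{s=1/2} L(E\otimes\chi, s)$ satisfies the trivial bound $r(\chi) \ll \log q$, the analytic rank is bounded by
\[
R \ll (\log q) \cdot N_q, \qquad N_q := \#\{\chi \bmod q : L(E \otimes \chi, \tfrac12) = 0\},
\]
so it suffices to show $N_q \ll q^{45/52+\varepsilon}$, i.e., that the central value vanishes for only a sparse set of characters. My approach would be a mollified variance (almost-all non-vanishing) argument: introduce a short Dirichlet polynomial mollifier
\[
M(\chi) := \sum_{n \leq q^\theta} \frac{x_n\, \chi(n)}{\sqrt n},
\]
with coefficients $x_n$ designed to approximate the inverse of $L(E\otimes\chi, s)$ — roughly $x_n = \mu(n)\lambda_E(n)$ with a smooth cutoff — so that $L(E\otimes\chi,\tfrac12) M(\chi) \approx 1$ for a typical character. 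Since $L(E\otimes\chi,\tfrac12) = 0$ forces $|LM - 1| = 1$, Chebyshev's inequality reduces the problem to establishing the variance bound
\[
\sum_{\chi \bmod q} \bigl| L(E\otimes\chi,\tfrac12) M(\chi) - 1 \bigr|^2 \ll q^{1-\delta+\varepsilon}
\]
with $\delta = 7/52$, which then yields $N_q \ll q^{45/52+\varepsilon}$.

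Expanding the square, the variance splits as $\sum_\chi|LM|^2 - 2\operatorname{Re}\sum_\chi LM + (q-1)$, so I would compute both the mollified first and second moments of $L(E\otimes\chi,\tfrac12)$ asymptotically, each to leading order $q-1$, so that these main contributions cancel in the variance. Inserting the approximate functional equation for $L(E\otimes\chi,\tfrac12)$ and applying character orthogonality (which contributes $q-1$ when the arguments are congruent modulo $q$ and vanishes otherwise), the diagonal contributions produce the desired main terms for suitable $x_n$. The off-diagonal contributions involve sums of Hecke eigenvalues $\lambda_E$ of $E$ in nontrivial residue classes modulo $q$ for the first moment, and a shifted convolution of two such for the second moment.

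The central technical obstacle is the off-diagonal contribution to the second moment, which after orthogonality reduces to a shifted convolution sum of $\operatorname{GL}(2)$ Hecke coefficients of the shape
\[
\sum_{\substack{m_1 n_1 \equiv m_2 n_2 \pmod{q} \\ m_1 n_1 \neq m_2 n_2}} \frac{\lambda_E(m_1) \lambda_E(m_2)\, x_{n_1} \overline{x_{n_2}}}{\sqrt{m_1 m_2 n_1 n_2}} \, V(m_1) \overline{V(m_2)},
\]
with smooth weights $V$ coming from the approximate functional equation. Obtaining a sufficient power saving here — uniformly in the shift and over the full relevant range of $m_i, n_j$ — requires substantial analytic input such as the Kuznetsov trace formula, the delta-symbol method, or shifted-convolution bounds of Blomer--Harcos / Jutila--Motohashi type, possibly combined with subconvexity estimates for $\operatorname{GL}(2)$-twisted $L$-functions. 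Chinta's bound $q^{7/8+\varepsilon}$ corresponds to using a shorter mollifier (equivalently, a weaker off-diagonal estimate); the improvement to $q^{45/52+\varepsilon}$ requires a stronger off-diagonal bound, with the final exponent $45/52$ emerging from the optimal balance between the mollifier length $\theta$ and the quality of the off-diagonal estimate.
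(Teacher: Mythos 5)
\textbf{There is a genuine structural gap in your plan: you are missing the key input that makes the problem tractable, namely Shimura's algebraicity theorem and the resulting reduction to a \emph{first} moment over a single Galois orbit.}

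Your proposed route—bound $N_q := \#\{\chi: L(E\otimes\chi,\tfrac12)=0\}$ via a mollified variance $\sum_{\chi \bmod q}|L(E\otimes\chi,\tfrac12)M(\chi)-1|^2 \ll q^{1-\delta}$—cannot reach a power saving in $N_q$ with current technology. Expanding the square, the variance equals $\sum_\chi|LM|^2 - 2\operatorname{Re}\sum_\chi LM + (q-1)$, and while the mollified first moment gives $\sum_\chi LM \sim q$, the mollified second moment with a mollifier of length $q^\theta$ ($\theta<1/2$ is what is feasible via off-diagonal analysis) gives $\sum_\chi |LM|^2 \sim c(\theta)\, q$ for a constant $c(\theta)>1$ depending on the mollifier length, and $c(\theta)\to 1$ only as $\theta\to\infty$. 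Thus the variance is $\asymp (c(\theta)-1)q$, of the same order $q$ as the trivial bound; Chebyshev then yields only that a positive \emph{proportion} of characters are non-vanishing, not that the vanishing set is $O(q^{1-\delta})$. The shifted-convolution machinery you invoke controls the error terms in the asymptotic, not the size of $c(\theta)-1$ itself. So even a perfect off-diagonal estimate does not make the variance small.

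The paper avoids this obstacle entirely by a different organizing principle. Shimura proved that $L(f\otimes\chi,\tfrac12)=0$ if and only if $L(f\otimes\chi^\sigma,\tfrac12)=0$ for every $\sigma\in\operatorname{Gal}(\mathbb{Q}[\chi]/\mathbb{Q})$ (Proposition~\ref{prop:Shimura}); consequently it suffices to show that the \emph{first} moment of the mollified $L$-value over a single Galois orbit of $\chi$ is nonzero, and then every character in that orbit has nonvanishing central value. First moments are far more tractable than variances. The analytic rank bound then comes not from a sparse exceptional set in the second-moment sense, but from counting characters whose order is too small (less than $q^{45/52+\varepsilon}$) to run the orbit average; there are at most $O(q^{45/52+\varepsilon})$ of these. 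The residual analytic difficulty—the dual side of the approximate functional equation after averaging over the orbit—becomes a sum over Kloosterman sums, and the paper's improvement over Chinta comes from extracting cancellation in moments of Kloosterman sums via H\"{o}lder with $k=4$ (a sum of products of $8$ Kloosterman sums), extending the $k=2$ method of \cite{KN}. None of this is visible from your outline, and without the Galois orbit/Shimura reduction the stated exponent is out of reach by the variance route.
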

As seen from \eqref{rank-formula}, the problem is equivalent to proving that for all but $O(q^{\frac{45}{52}+\varepsilon})$ of the $q-1$ Dirichlet characters of modulus $q$, the central value $L(E \otimes \chi, \half)$ is non-vanishing. To this end we prove the following more general result, which implies what we need since $E$ is known to be modular \cite{Wiles, WT, BCDT}.
\begin{mytheo}
 \label{thm:modforms}
Let $\varepsilon>0$. Let $f$ be a holomorphic Hecke newform of level $N$ with trivial nebentypus and rational Fourier coefficients. Let $\chi$ be a primitive Dirichlet character with modulus $q$ such that the order of $\chi$ (in $\widehat{(\mathbb{Z}/q\mathbb{Z})^*}$) is larger than $q^{\frac{45}{52}+\varepsilon}$. Then for $q$ a sufficiently large prime (depending on $\epsilon$ and $f$), we have that $L(f \otimes \chi, \tfrac12) \neq 0$. 
\end{mytheo}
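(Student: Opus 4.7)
The plan is to argue by contradiction: assume $\chi$ is a primitive character mod $q$ with $d := \mathrm{ord}(\chi) > q^{\frac{45}{52}+\varepsilon}$ satisfying $L(f\otimes\chi, \thalf) = 0$. The first ingredient is Galois invariance: since $f$ has rational Fourier coefficients, the central values $L(f\otimes\chi^a, \thalf)$, once normalized by the appropriate power of $\tau(\chi)$ and a Shimura period of $f$, are Galois conjugate to one another under $\mathrm{Gal}(\mq(\zeta_d)/\mq) \cong (\mz/d\mz)^*$. Vanishing at $\chi$ therefore forces vanishing at every character in its Galois orbit, i.e.\ at every element of order exactly $d$ in the cyclic subgroup $H := \langle \chi\rangle \leq \widehat{(\mz/q\mz)^*}$, giving $\phi(d) \asymp d$ simultaneous zeros inside $H$.

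The bulk of the proof should then be a first-moment computation over $H$:
\[
M(H) \;:=\; \sum_{\psi \in H} L(f\otimes\psi, \thalf).
\]
I would open each $L$-value via the approximate functional equation with smooth cutoffs at the analytic conductor $\asymp q\sqrt{N}$, and then interchange the sums using the orthogonality $\sum_{\psi \in H}\psi(n) = d \cdot \mathbf{1}_{n\in H^\perp}$, where $H^\perp \leq (\mz/q\mz)^*$ is the index-$d$ dual subgroup. The primary side yields a main term $\asymp d$ (from the $n = 1$ contribution) plus an off-diagonal indexed by $n \geq 2$ whose residues mod $q$ lie in $H^\perp$. The dual side, weighted by the root numbers $\epsilon(f\otimes\psi) = \eta_f \psi(N)\tau(\psi)^2/q$, reduces after opening the Gauss sums to a sum of incomplete Kloosterman sums $S(1,c;q)$ with $c$ constrained to a translate of $H^\perp$.

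On the other hand, under the vanishing hypothesis only characters $\psi \in H$ of order strictly less than $d$ can contribute to $M(H)$, so
\[
|M(H)| \;\leq\; \sum_{\substack{\psi \in H \\ \mathrm{ord}(\psi) < d}} |L(f\otimes\psi, \thalf)| \;\ll\; (d - \phi(d))\, q^{\frac14 - \delta + \varepsilon}
\]
using any available subconvexity exponent $\delta > 0$ for twisted $\mathrm{GL}(2)$ $L$-functions. Matching this upper bound against the main term $\asymp d$, while simultaneously balancing the off-diagonal primary- and dual-side errors, gives a threshold on $d$; optimization of the cutoff parameter(s) and, plausibly, the introduction of a mollifying Dirichlet polynomial adapted to $H$, should yield exactly the exponent $\frac{45}{52}$.

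The principal technical obstacle is the dual-side estimate: after opening $\tau(\psi)^2$ one is left with incomplete Kloosterman sums whose arguments are restricted to lie in the sparse multiplicative subgroup $H^\perp$. Such subgroup-restricted sums do not benefit from full Weil-type square-root cancellation for intermediate values of $|H^\perp| = (q-1)/d$, and improving the exponent from Chinta's $\frac{7}{8}$ to $\frac{45}{52}$ presumably requires either a Deligne/Katz-type bound tailored to subgroup sums, or an input of sharp $\mathrm{GL}(2)$ subconvexity (\`a la Blomer--Harcos or Petrow--Young) inserted into the moment analysis to replace the trivial per-$\psi$ bound on the non-primitive contributions.
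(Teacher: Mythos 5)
Your proposal captures the correct high-level framework — Shimura's Galois-equivariance to propagate vanishing across the orbit, an approximate functional equation, orthogonality over the subgroup $H = \langle\chi\rangle$, and the identification of the dual-side Kloosterman-type sums as the central obstruction — but it diverges from the paper's argument in ways that leave genuine gaps.

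\textbf{The unmollified first moment is a dead end, not an option.} You write the plan around $M(H) = \sum_{\psi\in H} L(f\otimes\psi,\thalf)$ with the mollifier relegated to a parenthetical ``plausibly.'' But the paper emphasizes (end of Section~1.2) that for a Galois orbit of a character of prime modulus, the primary-side off-diagonal $\sum_{n\geq 2}\lambda_f(n)\chi_{\mathrm{av}}(n)/\sqrt{n}$ has coefficients governed by $\mu(\mathrm{ord}(n^d))/\phi(\mathrm{ord}(n^d))$ and that showing cancellation there ``seems to be an intractable problem.'' Chinta's mollifier is precisely what collapses the primary side to $1 + S_1$ with $S_1$ supported on $n > X$, where the exponential cutoff and the $\ell^1$ bound on $\chi_{\mathrm{av}}$ suffice. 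In your setup the primary off-diagonal is unbounded by any stated tool.

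\textbf{The contradiction mechanism via subconvexity on the low-order characters does not close.} You bound $|M(H)| \leq \sum_{\psi\in H,\ \mathrm{ord}(\psi)<d} |L(f\otimes\psi,\thalf)| \ll (d-\phi(d))\, q^{1/4-\delta+\varepsilon}$ and want to contradict a main term $\asymp d$. But $d - \phi(d)$ need not be small: if $d$ has small prime factors (e.g.\ $2 \mid d$), then $d - \phi(d) \gg d$, and the bound becomes $d\, q^{1/4-\delta+\varepsilon}$, which is far larger than $d$ for any realistic (or even Lindel\"of-level) saving $\delta$. The paper avoids this entirely by averaging only over the Galois orbit of $\chi$ and using the implication ``nonzero mollified first moment over the orbit $\Rightarrow$ nonvanishing at $\chi$''; the characters of order $< d$ never enter.

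\textbf{The dual-side estimate in the paper is elementary and of a different flavor than you suggest.} No subconvexity for $\mathrm{GL}(2)$ twists and no Katz-type equidistribution for subgroup-restricted sums are used. The paper writes $\tilde{\chi}_{\mathrm{av}}(n)$ as a $q^{-1}$-weighted complete Kloosterman average with coefficients $\chi_{\mathrm{av}}(rN)$, then applies H\"older in the $(n,m)$-variable with exponent $k=4$, Cauchy in $h$, and a moment bound (Lemmas~\ref{lem:31mod}--\ref{lem:32mod}) for $\sum_h |\sum_r z_r S(h,r,q)|^{2k}$ resting on the complete-sum square-root cancellation for products of Kloosterman sums from Fouvry--Ganguly--Kowalski--Michel (\cite[Prop.\ 3.2]{fgkm}). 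The improvement over Chinta's $\frac78$ comes from upgrading $k=2$ (as in \cite{KN}) to $k=4$, i.e.\ from pointwise Weil bounds to eighth-moment cancellation in the $h$-averages — not from any input about multiplicative subgroups or $L$-function subconvexity.

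In short: the overall skeleton (Galois orbit, AFE, dual Kloosterman sums) is right, but you would need to (i) incorporate Chinta's mollifier and the unbalanced AFE in an essential way, (ii) drop the contradiction-over-$H$ formulation in favor of the direct orbit average, and (iii) replace the proposed subconvexity/Katz input with the H\"older-plus-Kloosterman-moment argument to recover the exponent $\frac{45}{52}$.
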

\noindent Theorem \ref{thm:modforms} implies Theorem \ref{thm:ellipticcurves} because the number of Dirichlet characters mod $q$ with order less than $q^{\frac{45}{52}+\varepsilon}$ is $O(q^{\frac{45}{52}+\varepsilon})$. Here and throughout we use the $\varepsilon$-convention, using $\varepsilon$ to denote an arbitrarily small positive constant, but not necessarily the same one from one occurrence to another.

\subsection{Averaging over Galois Orbits}
\label{subsec:galoisavg}

Let $\mq[\chi]$ be the field extension of $\mq$ obtained by adjoining the elements in the image of $\chi$ and let $G= \text{Gal}(\mq[\chi]/\mq)$. For $\sigma \in G$, we have a Dirichlet character $\chi^\sigma$ (with the same order as $\chi$) defined via $\chi^\sigma(n) = \sigma(\chi(n))$. This gives rise to a group action of $G$ on the set of Dirichlet characters of modulus $q$. 

Our starting point to prove non-vanishing is, following Rohrlich \cite{Rohrlich} and Chinta \cite{Chinta}, to utilize the following result of Shimura \cite{Sh1,Sh2}.
\begin{myprop}
\label{prop:Shimura}
    Let $f$, $\chi$ and $\sigma$ be as above. We have
        \begin{equation}
       \nonumber  
          L(f \otimes \chi, \thalf) =0 \iff L(f \otimes \chi^\sigma, \thalf) =0.
    \end{equation}
\end{myprop}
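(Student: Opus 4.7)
The plan is to deduce the proposition from Shimura's theorems on the algebraicity and Galois-equivariance of critical values of modular $L$-functions \cite{Sh1,Sh2}, which is essentially the content being invoked in the statement.

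First I would record the algebraicity statement. Because $f$ is a newform with rational Fourier coefficients, Shimura's work supplies nonzero periods $\Omega_f^+, \Omega_f^- \in \mc^{\times}$ such that for every primitive Dirichlet character $\psi$ whose modulus is coprime to the level $N$, the normalized central value
\begin{equation*}
A(f,\psi) \; := \; \frac{L(f \otimes \psi, \thalf)}{\tau(\psi)\,\Omega_f^{\epsilon(\psi)}}
\end{equation*}
lies in the cyclotomic field $\mq(\psi)$ generated by the values of $\psi$. Here $\tau(\psi)$ is the Gauss sum and the sign $\epsilon(\psi) \in \{+,-\}$ is determined by the parity $\psi(-1)$. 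The second ingredient is Shimura's Galois-equivariance: for every $\sigma \in \mathrm{Gal}(\mq(\psi)/\mq)$,
\begin{equation*}
\sigma\bigl(A(f,\psi)\bigr) \; = \; A(f,\psi^{\sigma}).
\end{equation*}
Since $\psi^{\sigma}(-1) = \sigma(\psi(-1)) = \psi(-1)$ (as $\pm 1$ lies in $\mq$ and is fixed by $\sigma$), the parities of $\psi$ and $\psi^{\sigma}$ agree, so $\epsilon(\psi^{\sigma}) = \epsilon(\psi)$ and the \emph{same} period appears on both sides of the equivariance relation.

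Given these two ingredients the proposition is formal. Applying the setup with $\psi = \chi$ and using that $\tau(\chi)$ and $\Omega_f^{\epsilon(\chi)}$ are nonzero, $L(f \otimes \chi, \thalf) = 0$ is equivalent to $A(f,\chi) = 0$. A field automorphism sends $0$ to $0$ and nonzero algebraic numbers to nonzero algebraic numbers, so $A(f,\chi) = 0 \iff \sigma(A(f,\chi)) = 0$; by Galois-equivariance the latter equals $A(f,\chi^{\sigma})$, which in turn vanishes iff $L(f \otimes \chi^{\sigma}, \thalf) = 0$. The only delicate point is checking that the formulation of Shimura's theorem one invokes really matches these normalizations, so that $\chi$ and $\chi^{\sigma}$ share a period; the parity observation above is precisely what secures this, and beyond it the argument is bookkeeping.
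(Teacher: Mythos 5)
Your argument is correct and is exactly the standard unwinding of the citation to Shimura's theorems \cite{Sh1,Sh2}: the paper states this proposition without a proof of its own, attributing it to Shimura, and your write-up supplies precisely the algebraicity and Galois-equivariance facts being invoked, together with the bookkeeping (Gauss sum and periods nonzero, automorphisms preserve vanishing) that converts them into the stated equivalence. The parity check guaranteeing that $\chi$ and $\chi^\sigma$ share the same period $\Omega_f^{\pm}$ is the right detail to pin down, and with $q$ a large prime the coprimality hypothesis $(q,N)=1$ needed for Shimura's theorem holds automatically.
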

\noindent Thus we have the implication \begin{equation}
\label{eq:moment}
 \frac{1}{|G|}\sum_{\sigma \in G} {L(f \otimes \chi^\sigma, \thalf)} \neq 0 \Longrightarrow L(f\otimes\chi,\thalf)\neq 0. 
\end{equation}
The greater the size of the set we are are averaging over (a Galois orbit of size equal to the order of $\chi$), the more likely it is that we will be able to understand the mean value on the left hand side of \eqref{eq:moment}. Therefore for characters of order $O(q^{1-\gamma})$, this will be a hopeless task if $\gamma$ is very close to $1$. But even for a character $\chi$ of maximum possible order $q-1$, it is not clear how to compute the mean value.

The $L$-function $L(f \otimes \chi, s)$ has analytic conductor $q^2$ in the $q$-aspect. In light of its approximate functional equation at $s=\half$, it is instructive to consider the test case 
 \begin{align}
\label{testcase} \frac{1}{|\mathcal{F}|}  \sum_{\chi \in \mathcal{F}} \ \sum_{1\le n\le q^{1+\varepsilon}} \frac{\lambda_f(n) \chi(n)}{\sqrt{n}}
 \end{align}
for different families $\mathcal{F}$, where $\lambda_f(n)$ are the Hecke eigenvalues associated to $f$. It is expected that the $n=1$ term will give rise to the main term of \eqref{testcase}. If $\mathcal{F}$ is the set of all Dirichlet characters, then
\begin{align*}
&\frac{1}{q-1}\sum_{\chi \bmod q} \  \sum_{1\le n\le q^{1+\varepsilon}} \frac{\lambda_f(n)  \chi(n)}{\sqrt{n}} =   1+O\Big(\sum_{\substack{2\le n\le q^{1+\varepsilon}\\ n\equiv 1 \bmod q}} \frac{|\lambda_f(n)|}{\sqrt{n}}\Big) =1 + O(q^{-\half+\varepsilon}).
 \end{align*}
If $\mathcal{F}$ is a Galois orbit of a primitive Dirichlet character of modulus $3^k$ as $k\to\infty$ (a simple case considered by Rohrlich), then (cf. \cite[Lemma 2.3]{KMN}) we have
 \begin{align*}
&\frac{1}{|G|}\sum_{\sigma \in G}  \  \sum_{1\le n\le q^{1+\varepsilon}} \frac{\lambda_f(n) \chi^\sigma(n)}{\sqrt{n}}
 =  1+ O\Big( \sum_{\substack{2\le n\le q^{1+\varepsilon}\\ n^2 \equiv 1 \bmod 3^{k-1}}} \frac{|\lambda_f(n)| }{\sqrt{n}}\Big) = 1+ O(3^{-\frac{k}{2}+\varepsilon}).
\end{align*}
If $\mathcal{F}$ is the Galois orbit of a Dirichlet character of prime modulus $q\to \infty$ and order $q-1$, then (cf. \eqref{chi-avg-formula}) we have
  \begin{align*}
&\frac{1}{|G|}\sum_{\sigma \in G}  \  \sum_{1\le n\le q^{1+\varepsilon}} \frac{\lambda_f(n) \chi^\sigma(n)}{\sqrt{n}}
 =  1+ \sum_{\substack{2\le n\le q^{1+\varepsilon}\\ (n,q)=1 }}  \frac{\lambda_f(n) }{\sqrt{n}}
 \frac{\mu(\mathrm{ord}(n))}{\phi(\mathrm{ord}(n))},
\end{align*}
where $\mathrm{ord}(n)$ denotes the order of $n$ in $(\mz/q\mz)^*$. In this case we must show that the contributions of the $n\ge 2$ terms exhibit cancellation, but this seems to be an intractable problem. 

Chinta's beautiful idea to get around this difficulty was to instead use the implication
 \begin{equation}
\nonumber
 \frac{1}{|G|}\sum_{\sigma \in G} {L(f \otimes \chi^\sigma, \thalf)}  M_X(f\otimes \chi^\sigma,\thalf) \neq 0 \Longrightarrow L(f\otimes\chi,\thalf)\neq 0,
\end{equation}
where $M_X(f\otimes \chi^\sigma,\thalf)$ is a mollifier, defined in \eqref{mol-def}. The mollifier comes at a cost but has the effect (cf. \eqref{mol-eff}) that
\[
 \frac{1}{|G|}\sum_{\sigma \in G}  \Big(\sum_{1\le n\le q^{1+\varepsilon}} \frac{\lambda_f(n) \chi^\sigma(n)}{\sqrt{n}} \Big)M_X(f\otimes \chi^\sigma,\thalf)\sim \frac{1}{|G|}\sum_{\sigma \in G}  1 \sim 1.
\]
However, by Lemma \ref{lem:afemol}, the approximate functional equation of $L(f\otimes\chi,\thalf)$ also contains a dual sum of the shape
\[
\varepsilon(f\otimes \chi) \sum_{1\le n\le q^{1+\varepsilon}} \frac{\lambda_f(n) \ov{\chi}(n)}{\sqrt{n}},
\]
where $\varepsilon(f\otimes \chi)$ is the root number of the $L$-function. The contribution of the dual sum must be shown to be smaller than the main term, and this is plausible due to extra cancellation from the root number. Chinta was able to prove exactly this, after lessening the effect of the dual sum by using an unbalanced approximate function equation that is shorter on the dual side.

Our work picks up from Chinta's set-up. Our new contribution is a superior treatment of the dual sum contribution. This boils down to obtaining non-trivial cancellation in a certain sum of Kloosterman sums (where Chinta was only able to obtain pointwise bounds using Weil's bound for the Kloosterman sums). For this, we use an extension of ideas that were developed in the work of the second author and Ngo \cite{KN} on the non-vanishing of Dirichlet $L$-functions.

\section{The work of Chinta}
All of the background material in this section, except for a small part of Lemma \ref{av-results}, is already established in \cite{Chinta}. Throughout the rest of the paper, suppose that $q$ is a large enough prime so that $(q,N)=1$. Let
\begin{equation*}
     \varepsilon(f\otimes \chi) = \varepsilon(f) \chi(N) \frac{\tau(\chi)^2}{q},
 \end{equation*}
where $\tau(\chi)$ denotes the Gauss sum and $|\varepsilon(f)|=1$.
Let $X=q^b$ and $Y=q^a$ for some $0<b<a<1$ to be chosen later. There exist constants $a_n, c_n \ll n^\varepsilon$ depending on $f$ such that the Dirichlet polynomial
\begin{align}
\label{mol-def} M_X(f\otimes \chi,s) = \sum_{n\le X} \frac{c_n \chi(n)}{n^s}
\end{align}
satisfies for $\Re(s)>1$,
\begin{align}
\label{mol-eff} L(f\otimes \chi,s) M_X(f\otimes \chi,s) = 1+ \sum_{n>X}  \frac{a_n \chi(n)}{n^s}.
\end{align}
For more details, see \cite[Section 3]{Chinta}.

At $s=\half$, we have the following approximate functional equation.
\begin{mylemma}
\label{lem:afemol}
    We have
    \begin{multline*}
        L(f \otimes \chi, \thalf)M_X(f \otimes \chi,\thalf) =1 + \sum_{n > X} \frac{a_n \chi(n)}{\sqrt{n}} \mathrm{exp}\Big(\frac{-2\pi n}{qY\sqrt{N}}\Big) \\ + \varepsilon(f\otimes \chi) \sum_{n \geq 1} \sum_{1 \leq m \leq X} \frac{\lambda_f(n)c_m \ov{\chi}(n)\chi(m)}{\sqrt{nm}}\mathrm{exp}\Big(\frac{-2\pi n Y}{mq\sqrt{N}}\Big) + O(q^{-1}).
    \end{multline*}
    \end{mylemma}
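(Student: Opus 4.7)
The result is a standard approximate functional equation for the mollified product, and the plan is a contour-integral derivation. The starting point is the integral
\[
\mathcal I := \frac{1}{2\pi i}\int_{(2)} L(f\otimes\chi, \thalf+s)\,M_X(f\otimes\chi, \thalf+s)\,\Gamma(s)\left(\frac{qY\sqrt N}{2\pi}\right)^{\!s} ds,
\]
whose kernel $\Gamma(s)\,y^{-s}$ is the Mellin representation of $e^{-y}$; this choice of weight is precisely what produces the exponential cutoffs claimed in the statement. I would evaluate $\mathcal I$ in two ways and equate.

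\emph{First evaluation.} On $\Re s = 2$ the identity \eqref{mol-eff} applies, and termwise integration together with Mellin inversion yields
\[
\mathcal I = \exp\!\Big(\tfrac{-2\pi}{qY\sqrt N}\Big) + \sum_{n>X}\frac{a_n\chi(n)}{\sqrt n}\exp\!\Big(\tfrac{-2\pi n}{qY\sqrt N}\Big) = 1 + \sum_{n>X}\frac{a_n\chi(n)}{\sqrt n}\exp\!\Big(\tfrac{-2\pi n}{qY\sqrt N}\Big) + O(q^{-1}),
\]
since $qY\sqrt N \gg q$ so the first exponential equals $1 + O(q^{-1})$.

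\emph{Second evaluation.} Shift the contour of $\mathcal I$ past the simple pole of $\Gamma(s)$ at $s=0$, collecting the residue $L(f\otimes\chi,\thalf)\,M_X(f\otimes\chi,\thalf)$. On the shifted contour apply the functional equation for $L(f\otimes\chi, s)$ (using $\bar f = f$, since the Fourier coefficients are rational), substitute $s\mapsto -s$, and shift further to a line $\Re s > \thalf$ where the dual series $L(f\otimes\ov\chi, \thalf+s) = \sum_{n\geq 1}\lambda_f(n)\ov\chi(n) n^{-\thalf-s}$ converges absolutely. Unfolding this series together with the polynomial $M_X(f\otimes\chi, \thalf-s)$ term by term, the resulting $\Gamma$-integral Mellin-inverts to the dual weight $\exp(-2\pi nY/(mq\sqrt N))$, producing
\[
\varepsilon(f\otimes\chi)\sum_{n\geq 1}\sum_{1\leq m\leq X}\frac{\lambda_f(n)\,c_m\,\ov\chi(n)\chi(m)}{\sqrt{nm}}\exp\!\Big(\tfrac{-2\pi nY}{mq\sqrt N}\Big).
\]
Equating the two evaluations of $\mathcal I$ and rearranging gives the claim.

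The main obstacle is the last step. The kernel $\Gamma(s)$ in $\mathcal I$ does not exactly match the archimedean factor $\Gamma(s+\kappa+\thalf)$ (where $\kappa=(k-1)/2$) introduced by the functional equation, so after the substitution $s\mapsto -s$ the integrand carries an extra $\Gamma$-ratio $\Gamma(\kappa+\thalf+s)\Gamma(-s)/\Gamma(\kappa+\thalf-s)$ in place of a clean $\Gamma(s)$. By Stirling this ratio agrees with $\Gamma(s)$ up to a multiplicative factor $1 + O(|s|^{-1})$, and the sub-leading contribution integrates against the bounded Dirichlet coefficients $\lambda_f(n)$ and $c_m$ (with $m\leq X=q^b$) to $O(q^{-1})$, the gap $X=q^b < Y=q^a < q$ providing the necessary slack. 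A related point of care is that all contour shifts must be confined to the strip $|\Re s| < 1$: the poles of $\Gamma(s)$ at negative integers and of $\Gamma(-s)$ at positive integers would otherwise yield residues of size $q^{j(1+b-a)+b/2+\varepsilon}$, exceeding our error budget, and by staying within this strip they are avoided entirely.
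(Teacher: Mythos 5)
Your contour-integral derivation is the standard route, and the paper's own proof simply cites Chinta's Section 4, which proceeds along essentially the same lines: introduce $\mathcal I$ with kernel $\Gamma(s)(qY\sqrt N/2\pi)^s$, evaluate once via the mollifier identity \eqref{mol-eff} and once by shifting past $s=0$, collecting the residue $L(f\otimes\chi,\thalf)M_X(f\otimes\chi,\thalf)$, and applying the functional equation. The unbalanced scaling $qY\sqrt N/(2\pi)$, producing primal length $\asymp qY$ and dual length $\asymp q/Y$, is the right structural choice, and the first evaluation and the contour bookkeeping are correct.

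The place where the argument is not sound is the dispatch of the $\Gamma$-ratio. You assert that, with $\kappa=(k-1)/2$,
\[
\frac{\Gamma(\kappa+\tfrac12+s)\,\Gamma(-s)}{\Gamma(\kappa+\tfrac12-s)} = \Gamma(s)\bigl(1+O(|s|^{-1})\bigr),
\]
and that the $O(|s|^{-1})$ discrepancy integrates to $O(q^{-1})$. Both halves of this are problematic. Set $a=\kappa+\tfrac12=k/2$. The ratio $\Gamma(a+s)\Gamma(-s)/(\Gamma(a-s)\Gamma(s))$ equals $-1$ at $s=0$ (from the residues of $\Gamma(\pm s)$) and tends to $e^{i\pi a}$ as $|\Im s|\to\infty$ by Stirling; these agree only when $k\equiv 2\pmod 4$, so in general the ratio is not within $O(|s|^{-1})$ of any constant, let alone of $1$. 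Moreover, even granting a pointwise $O(|s|^{-1})$ bound, the inverse Mellin transform of that discrepancy against $\Gamma(s)y^{-s}$ produces a weight function that is merely of polynomial size in its argument, and when summed against $\lambda_f(n)c_m/\sqrt{nm}$ over the dual ranges $n\ll q/Y$, $m\le X$ it does not give $O(q^{-1})$ without a genuine additional argument; the ``slack from $X<Y<q$'' you invoke does not supply one. The correct resolution in the case that matters here, $k=2$ (so $a=1$), is the exact identity
\[
\frac{\Gamma(1+s)\,\Gamma(-s)}{\Gamma(1-s)} \;=\; \frac{s\,\Gamma(s)\,\Gamma(-s)}{-s\,\Gamma(-s)} \;=\; -\Gamma(s),
\]
so the dual kernel is literally $-\Gamma(s)$, the constant $-1$ is absorbed into the conventional normalization of $\varepsilon(f)$, and the only error is the harmless $e^{-2\pi/(qY\sqrt N)}-1 = O(q^{-1})$ you already flagged. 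Replacing the Stirling hand-wave with this exact $\Gamma$-identity (and, if one wants weight $k>2$, keeping the true inverse-Mellin weight rather than a pure exponential and checking it has the same decay, which is all the downstream estimates use) is what is actually needed to close the argument.
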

    \proof
    This is established in \cite[Section 4]{Chinta}.
    \endproof

Defining
\[
 \chi_{\text{av}}(n) := \frac{1}{|G|} \sum_{\sigma \in G} \chi^\sigma(n)
 \]
 and
 \[
     \tilde{\chi}_{\text{av}}(n) := \frac{1}{|G|} \sum_{\sigma \in G} \varepsilon(f \otimes \chi^\sigma) \ov{\chi^\sigma}(n),
     \]
 we have the following pointwise and average evaluations of these sums.
 \begin{mylemma}
 \label{av-results}
Let $q$ be a prime. Let $\chi$ be a primitive Dirichlet character of modulus $q$ such that the order of $\chi$ is $\frac{\phi(q)}{d}$ for some $d|\phi(q)$. We have for $(nN,q)=1$,
\begin{align}
\label{chi-avg-formula} {\chi}_{\mathrm{av}}(n)=\frac{\mu(\mathrm{ord}(n^d))}{\phi(\mathrm{ord}(n^d))},
\end{align}
and
\begin{align}
\label{tilde-chi-av}     \tilde{\chi}_{\mathrm{av}}(n) = \frac{\varepsilon(f)}{q} \sum_{r\bmod q} {\chi}_{\mathrm{av}}(rN) S(r  , n, q),
     \end{align}
     where $S(r , n, q)$ is the Kloosterman sum,
     and
     \begin{align}
 \label{chi-avg} q^{-\varepsilon} d \ll  \sum_{r \bmod q} |{\chi}_{\mathrm{av}}(r) | =  \sum_{r \bmod q} |{\chi}_{\mathrm{av}}(rN) |\ll  q^\varepsilon d.
     \end{align}
    \end{mylemma}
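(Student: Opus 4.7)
The plan is to handle the three assertions by reducing them to standard facts about the cyclic group $(\mz/q\mz)^*$, Gauss sums of primitive characters modulo a prime, and Ramanujan sums.

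For \eqref{chi-avg-formula}, I would note that since $\chi$ has order $m:=\phi(q)/d$, its image is exactly the group of $m$-th roots of unity, so $\mq[\chi]=\mq(\zeta_m)$ and $G\cong(\mz/m\mz)^*$ acts by $\chi^{\sigma_k}(n)=\chi(n)^k$ for $(k,m)=1$. Fixing a generator $g$ of $(\mz/q\mz)^*$ and writing $n=g^j$ with $\chi(g)=\zeta_m$, a short calculation shows that $\chi(n)$ has order $m/\gcd(m,j)$, while $\mathrm{ord}(n^d) = (q-1)/\gcd(q-1,jd)=md/(d\gcd(m,j))=m/\gcd(m,j)$. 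Calling this common value $e$, we have $\chi(n)=\zeta_e^a$ for some $(a,e)=1$. Since the reduction $(\mz/m\mz)^*\twoheadrightarrow(\mz/e\mz)^*$ is surjective with uniform fibers of size $\phi(m)/\phi(e)$, averaging collapses to the Ramanujan sum $\sum_{(k,e)=1}\zeta_e^{ak}=\mu(e)$, giving $\chi_{\mathrm{av}}(n)=\mu(e)/\phi(e)$.

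For \eqref{tilde-chi-av}, I would open up $\tau(\chi^\sigma)^2=\sum_{a,b\bmod q}\chi^\sigma(ab)e((a+b)/q)$ and combine it with the factors $\chi^\sigma(N)\overline{\chi^\sigma}(n)$ to obtain $\sum_{a,b}\chi^\sigma(Nab\bar n)\,e((a+b)/q)$, where $\bar n$ denotes the inverse of $n$ modulo $q$. Setting $r=ab\bar n$, i.e., $b\equiv rn\bar a\bmod q$, collapses the inner sum over $a$ into the Kloosterman sum $S(1,rn;q)=S(r,n;q)$ (the last equality via $a\mapsto\bar r a$). Averaging over $\sigma$ and factoring out $\varepsilon(f)/q$ produces the stated identity. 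This is clean because $\sigma$ fixes $e(a/q)$ (as $\gcd(q,m)=1$), so applying $\sigma$ simply replaces $\chi$ by $\chi^\sigma$ throughout.

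For \eqref{chi-avg}, the equality of the two sums is immediate from the bijection $r\mapsto rN$ on $(\mz/q\mz)^*$. To evaluate $\sum_r|\chi_{\mathrm{av}}(r)|$ via \eqref{chi-avg-formula}, I would partition the nonzero residues by $e:=\mathrm{ord}(r^d)$, which ranges over divisors of $m$. The $d$-th power map on $(\mz/q\mz)^*$ is $d$-to-$1$ onto the unique subgroup $H$ of order $m$, and within the cyclic group $H$ there are $\phi(e)$ elements of order $e$. Hence
\begin{equation*}
\sum_{r\bmod q}|\chi_{\mathrm{av}}(r)|=\sum_{e\mid m}d\phi(e)\cdot\frac{|\mu(e)|}{\phi(e)}=d\sum_{e\mid m}|\mu(e)|=d\cdot 2^{\omega(m)},
\end{equation*}
which lies in $[d,\,d\cdot q^\varepsilon]$ by the usual bound $2^{\omega(m)}\ll m^\varepsilon$. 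The only slightly delicate step is the order identity $\mathrm{ord}(\chi(n))=\mathrm{ord}(n^d)$ underlying \eqref{chi-avg-formula}; everything else is routine, and I do not anticipate a substantive obstacle.
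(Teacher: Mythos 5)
Your proposal is correct, and it supplies details the paper itself does not: for \eqref{chi-avg-formula} and \eqref{tilde-chi-av} the paper simply cites Chinta's Proposition 1, whereas you give the full (standard) derivations — the Galois average collapsing to a Ramanujan sum $\sum_{(k,e)=1}\zeta_e^{ak}=\mu(e)$ via the surjection $(\mz/m\mz)^*\twoheadrightarrow(\mz/e\mz)^*$, and the Gauss-sum expansion producing $S(r,n;q)$. One small slip in the latter: to pass from $S(1,rn;q)$ to $S(r,n;q)$ the substitution should be $a\mapsto ra$ rather than $a\mapsto\bar r a$; the conclusion is unaffected. The one piece the paper proves itself is the lower bound in \eqref{chi-avg}, and there your route is genuinely different. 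The paper restricts the sum to the $\phi(d)$ residues $r$ of exact order $d$ (so that $\mathrm{ord}(r^d)=1$ and $|\chi_{\mathrm{av}}(r)|=1$), obtaining $\ge\phi(d)\gg d^{1-\varepsilon}$. You instead partition $(\mz/q\mz)^*$ by $e=\mathrm{ord}(r^d)$, using that the $d$-th power map is $d$-to-$1$ onto the order-$m$ subgroup, and evaluate the sum exactly as $d\sum_{e\mid m}|\mu(e)|=d\cdot 2^{\omega(m)}$. This closed form delivers both the lower and upper bounds at once, and it also sidesteps a minor infelicity in the paper's display, which writes $|\chi_{\mathrm{av}}(r)|=1/\phi(\mathrm{ord}(r^d))$ without the $|\mu(\mathrm{ord}(r^d))|$ factor (harmless there, since the paper only uses $r$ with $\mathrm{ord}(r^d)=1$, where $\mu(1)=1$, but your bookkeeping is the more careful one).
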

    \proof This may be found in the statement and proof of \cite[Proposition 1]{Chinta}, except for the lower bound in \eqref{chi-avg}, which we will show here. But first we note that in the upper bound in \eqref{chi-avg}, Chinta has $\sigma_1(d)2^{\nu(\phi(q))}$ while we have $dq^\varepsilon$, where $\sigma_1(d)$ is the sum of the positive divisors of $d$ and $\nu(\phi(q))$ is the number of distinct prime divisors of $\phi(q)$. Our replacement is valid because $\sigma_1(d)=d\sum_{k|d} \frac{1}{k}\ll q^\varepsilon d$ and $2^{\nu(\phi(q))}\ll 2^{\frac{\log q}{\log \log q}}\ll q^\varepsilon$. 
 
To prove the lower bound in \eqref{chi-avg}, note that
\[
 \sum_{r \bmod q} |{\chi}_{\mathrm{av}}(r) |=  \sum_{1\le r <q} |{\chi}_{\mathrm{av}}(r) | = \sum_{1\le r <q} \frac{1}{\phi(\mathrm{ord}(r^d))}\ge \sum_{\substack{1\le r <q\\ \mathrm{ord}(r)=d }} 1 =\phi(d) \gg d^{1-\varepsilon}.\qedhere
\]
 \endproof
 
 By Lemma \ref{lem:afemol}, we can write
 \[
  \frac{1}{|G|}\sum_{\sigma \in G} {L(f \otimes \chi^\sigma, \thalf)}  M_X(f\otimes \chi^\sigma,\thalf)= 1+ S_1 +S_2 +o(1), 
  \]
 where
 \[
 S_1= \sum_{n > X} \frac{a_n \chi_{\text{av}}(n)}{\sqrt{n}} \mathrm{exp}\Big(\frac{-2\pi n}{qY\sqrt{N}}\Big)
 \]
 and
 \[
S_2= \sum_{n\ge 1} \sum_{1 \leq m \leq X} \frac{\lambda_f(n)c_m  \tilde{\chi}_{\text{av}}(n\ov{m}) }{\sqrt{nm}}\mathrm{exp}\Big(\frac{-2\pi n Y}{mq\sqrt{N}}\Big).
 \]
Our goal is to show that $S_1=o(1)$ and $S_2=o(1)$, which we achieve if the order of $\chi$ is not too small (equivalently, in the notation below, $\gamma$ is not too close to 1). 

For $S_1$, we have the following bound.
  \begin{myprop}
 \label{S1-result}
 Let $q$ be a prime. Let $\chi$ be a primitive Dirichlet character of modulus $q$ such that the order of $\chi$ is $\frac{\phi(q)}{d}$ for some $d|\phi(q)$.  Writing $d=q^\gamma$, suppose that $\gamma>0$. For any $a+1<c<2$, we have
\[
    S_1 \ll q^\varepsilon \cdot \mathrm{max}(q^{\gamma-\tfrac{b}{2}},q^{\gamma+\tfrac{c}{2}-1}) + \mathrm{exp}\Big(-\frac{q^{c-1-a-\varepsilon}}{\sqrt{N}}\Big).
 \]
    \end{myprop}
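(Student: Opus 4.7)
The plan is to split $S_1$ at $n = q^c$ according to the two regimes of the exponential weight. Write
\[
S_1 = \sum_{X < n \leq q^c} \frac{a_n \chi_{\mathrm{av}}(n)}{\sqrt n}\exp\!\Big(\tfrac{-2\pi n}{qY\sqrt N}\Big) + \sum_{n > q^c}\frac{a_n \chi_{\mathrm{av}}(n)}{\sqrt n}\exp\!\Big(\tfrac{-2\pi n}{qY\sqrt N}\Big).
\]
For the tail $n > q^c$, since $n/(qY\sqrt N) \geq q^{c-1-a}/\sqrt N$, the exponential weight decays at least like $\exp(-q^{c-1-a-\varepsilon}/\sqrt N)$; combined with $|a_n\chi_{\mathrm{av}}(n)|\ll n^\varepsilon$ and the rapid convergence of the series, this yields the second claimed term.

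For the main range, I would use $|a_n|\ll n^\varepsilon$, bound the weight by $1$, and perform a dyadic decomposition into blocks $M < n \leq 2M$ with $X/2 \leq M \leq q^c$. The central input is that $\chi_{\mathrm{av}}$ is $q$-periodic (as a linear combination of Dirichlet characters mod $q$) and satisfies, by \eqref{chi-avg} of Lemma \ref{av-results},
\[
\sum_{r \bmod q} |\chi_{\mathrm{av}}(r)| \ll q^{\gamma+\varepsilon}.
\]
I would split the analysis at $M = q$. If $M \leq q$, then a crude bound over a single complete residue system gives
\[
\sum_{M < n \leq 2M}\frac{|\chi_{\mathrm{av}}(n)|}{\sqrt n} \leq \frac{1}{\sqrt M}\sum_{r \bmod q}|\chi_{\mathrm{av}}(r)| \ll \frac{q^{\gamma+\varepsilon}}{\sqrt M},
\]
which is maximized at $M \asymp X = q^b$, contributing $q^{\gamma - b/2 + \varepsilon}$. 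If $M > q$, partitioning $(M,2M]$ into $O(M/q)$ complete residue systems gives
\[
\sum_{M < n \leq 2M}\frac{|\chi_{\mathrm{av}}(n)|}{\sqrt n} \ll \frac{M}{q\sqrt M}\cdot q^{\gamma+\varepsilon} = \sqrt M\, q^{\gamma-1+\varepsilon},
\]
which is maximized at $M \asymp q^c$, contributing $q^{\gamma + c/2 - 1 + \varepsilon}$. Summing the $O(\log q)$ dyadic pieces (absorbed into $q^\varepsilon$) gives the first claimed term.

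The only real bookkeeping point is that the two regimes $M \leq q$ and $M > q$ are extremized at opposite endpoints, which is exactly why the final bound is the maximum of two terms rather than a single one. I don't foresee any serious obstacle: the polynomial bound on the mollifier coefficients $a_n$, the periodicity of $\chi_{\mathrm{av}}$, and the $L^1$ average bound \eqref{chi-avg} are all already in hand, and the argument is essentially a careful trivial estimate.
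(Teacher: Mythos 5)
Your proposal is correct and follows essentially the same route as the paper (which defers to Chinta, page 22): split at $n = q^c$, bound the range $X < n \le q^c$ trivially via periodicity of $\chi_{\mathrm{av}}$ and the $L^1$ bound \eqref{chi-avg}, and use the exponential decay of the weight for $n > q^c$. The dyadic bookkeeping you spell out, with the two regimes $M \le q$ and $M > q$ extremized at opposite ends, correctly accounts for the $\max$ in the stated bound.
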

    \proof 
    This may be found in \cite[page 22]{Chinta} and arises as follows. For $X<n< q^c$, $S_1$ is bounded trivially using \eqref{chi-avg}. For $n\ge q^c$, $S_1$ is bounded trivially using the exponential decay of $\mathrm{exp}(\frac{-2\pi n}{qY\sqrt{N}})$.
    \endproof 
    
Now we turn to $S_2$. Using \eqref{tilde-chi-av}, we have
\begin{align}
\label{s2exp}
S_2\ll \frac{1}{q}\Big| \sum_{ 1\le n \le \frac{q^{1+\varepsilon}X}{Y} } \ \sum_{1 \leq m \leq X} \ \sum_{r \bmod q} {\chi}_{\mathrm{av}}(rN) \frac{\lambda_f(n)c_m }{\sqrt{nm}}\mathrm{exp}\Big(\frac{-2\pi n Y}{mq\sqrt{N}}\Big)   S(r , n \ov{m}, q) \Big| + O(q^{-100}),
\end{align}
where the error arises from the truncation of the $n$-sum. Note that $(nm,q)=1$ in these sums (in fact $n<q^{1-\varepsilon}, m<q^{1-\varepsilon}$ since $0<b<a<1$). Chinta bounded $S_2$ by moving the absolute value signs to the inside, applying \eqref{chi-avg}, and applying Weil's bound for the Kloosterman sums. We will obtain a superior bound by proving cancellation between the Kloosterman sums.
 
 \section{Proof of Theorem \ref{thm:modforms}}
 Our improvement over Chinta's work is contained in the following bound for $S_2$, which will be proved in the next section.
  \begin{myprop}
 \label{S2-result}
Let $q$ be a prime. Let $\chi$ be a primitive Dirichlet character of modulus $q$ such that the order of $\chi$ is $\frac{\phi(q)}{d}$ for some $d|\phi(q)$.  Write $d=q^\gamma$ and suppose that $0<2b<a<1$. We have
 \[
 S_2 \ll q^{\varepsilon + \frac{3b}{4} -  \frac{3a}{8}  -\frac{1}{16}+\gamma} +q^{\varepsilon + \frac{3b}{4} -  \frac{3a}{8}  +\frac{\gamma}{2}}.
 \]
    \end{myprop}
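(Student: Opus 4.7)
My plan is to establish the two stated bounds by separate arguments. Starting from \eqref{s2exp}, I would write $S_2 \ll q^{-1}|T|$ where $T = \sum_r \chi_{\mathrm{av}}(rN) K(r)$ and
\[
K(r) = \sum_{n,m} \frac{c_m \lambda_f(n)}{\sqrt{nm}}\, w(n, m)\, S(r, n\bar m, q), \qquad w(n,m) = \mathrm{exp}\Big(\frac{-2\pi n Y}{m q \sqrt N}\Big).
\]

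For the second bound $q^{\varepsilon + 3b/4 - 3a/8 + \gamma/2}$, I would apply Cauchy--Schwarz in $r$: $|T|^2 \leq (\sum_r |\chi_{\mathrm{av}}(rN)|^2)(\sum_r |K(r)|^2)$. The first factor is $\ll q^{\gamma+\varepsilon}$ by Lemma \ref{av-results}. For the second, expanding the square and using the classical identity $\sum_r S(r, a, q)\, S(r, b, q) = q\, c_q(a - b)$ reduces matters to a sum over $(n_1, n_2, m_1, m_2)$ constrained by $q \mid n_1 m_2 - n_2 m_1$. The hypothesis $2b < a$ ensures $|n_i m_j| \leq q^{1 + 2b - a} < q$, so the congruence collapses to the exact equality $n_1 m_2 = n_2 m_1$; parameterizing these diagonal tuples by coprime pairs and inserting Deligne's bound on $\lambda_f$ then produces the second bound after optimization.

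For the first bound $q^{\varepsilon + 3b/4 - 3a/8 - 1/16 + \gamma}$, I would instead apply Voronoi summation to the $n$-sum. After opening $S(r, n\bar m, q) = \sum_{x \in (\mz/q\mz)^{*}} e((rx + n\bar m \bar x)/q)$ and dualizing the inner $n$-sum via the Voronoi formula for holomorphic newforms (using $(q, N) = 1$), the ensuing $x$-sum yields $q\, \delta_{q \mid r - m\bar N n^*} - 1$, and the $-1$ contribution vanishes upon summing $\chi_{\mathrm{av}}(rN)$ over $r$ because $\sum_r \chi_{\mathrm{av}}(rN) = 0$. This produces
\[
T \asymp \frac{1}{\sqrt N} \sum_{m \leq X,\, n^* \geq 1} \chi_{\mathrm{av}}(m n^*)\, \frac{c_m \lambda_f(n^*)}{\sqrt m}\, H_m(n^*),
\]
where $H_m(n^*)$ is a Bessel transform with effective support $n^* m \lesssim q^{1 + a + \varepsilon}$ and $|H_m(n^*)| \ll m\sqrt{n^*}/q^a$ in that range. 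Combining the $L^1$-bound $\sum_r |\chi_{\mathrm{av}}(rN)| \ll q^{\gamma+\varepsilon}$ with an auxiliary Cauchy--Schwarz step that extracts further cancellation from a residual sum of Kloosterman sums, following \cite{KN}, produces the first bound.

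The hard part will be obtaining the additional $q^{-1/16}$ saving in the first bound. This requires extending the Khan--Ngo strategy to the modular $L$-function setting, where the Voronoi-dualized $\lambda_f$-sum must interact nontrivially with $\chi_{\mathrm{av}}$. The secondary Cauchy--Schwarz step must be applied in precisely the right variable so that a residual sum of Kloosterman sums emerges, on which Weil's bound (or a deeper input) delivers the additional cancellation. The specific exponent $1/16$ reflects an optimal balance between this secondary saving and the size of the auxiliary sum, analogous to the quality improvement arising from mean-square estimates of sums of Kloosterman sums in \cite{KN}.
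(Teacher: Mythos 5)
Your proposal is a genuinely different route from the paper's, but it has serious gaps that prevent it from establishing the stated bound.

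\textbf{Structural mismatch.} The paper does not derive the two terms by two separate arguments. Both terms come from a single application of Lemma \ref{lem:32mod}, which combines H\"older's inequality with exponent $(\frac{k}{k-1},k)$, a completion in the variable $h\equiv n\ov{m}$, and the bound on the $2k$-th moment of $\sum_r z_r S(h,r,q)$ from Lemma \ref{lem:31mod}. The two terms $q^{\gamma}$ and $q^{\gamma/2}$ in Proposition \ref{S2-result} correspond to the two terms $L_1^{2k}q^{k+\half}$ (off-diagonal tuples, using square-root cancellation in $\sum^*_h S(h,r_1,q)\cdots S(h,r_{2k},q)$ from \cite[Proposition 3.2]{fgkm}) and $L_1^k q^{k+1}$ (repeated-index tuples via Weil), specialized to $k=4$. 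This is the central new idea of the paper, and it is absent from your plan.

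\textbf{The Cauchy--Schwarz argument for the second term fails.} Applying Cauchy--Schwarz in $r$ and the orthogonality $\sum_r S(r,a,q)S(r,b,q)=q\,c_q(a-b)$, the diagonal $n_1m_2=n_2m_1$ (forced by $NM<q$) contributes $\sum_{n_1m_2=n_2m_1} (n_1m_2)^{-1} q^2 \ll q^{2+\varepsilon}$ to $\sum_r|K(r)|^2$ (the weighted count over the diagonal is only polylogarithmic). Hence $|T|^2\ll q^{\gamma+\varepsilon}\cdot q^{2+\varepsilon}$ and $S_2 \ll q^{-1}|T|\ll q^{\gamma/2+\varepsilon}$. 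But the stated second term is $q^{\gamma/2+\frac{3b}{4}-\frac{3a}{8}+\varepsilon}$, where $\frac{3b}{4}-\frac{3a}{8}=\frac{3}{8}(2b-a)<0$ under the hypothesis $2b<a$. Your bound is strictly weaker and does not prove the proposition; Deligne's bound and parameterizing the diagonal cannot recover the negative contribution $\frac{3b}{4}-\frac{3a}{8}$, since that term in the paper comes from the unabsorbed factor $(NM)^{\half-\frac{1}{2k}}$ produced by the H\"older step, not from diagonal counting.

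\textbf{The Voronoi argument for the first term is not a proof.} The heavy lifting — the $q^{-1/16}$ saving — is precisely the point, and you acknowledge it is not obtained. In the paper this exponent is $\frac{1}{4k}$ with $k=4$, and arises from the bound $\sums_h S(h,r_1,q)\cdots S(h,r_{2k},q)\ll q^{k+\half}$ for tuples with a component distinct from all others (a result of Fouvry--Ganguly--Kowalski--Michel which the paper cites), together with the observation that the moment condition \eqref{moments-condition} keeps the $\mathfrak D$-contribution at $L_1^k$ rather than $L_1^{2k}$. Nothing in your Voronoi-plus-residual-Cauchy--Schwarz sketch identifies this input or shows how the exponent $\frac{1}{16}$ would emerge; the phrase ``applied in precisely the right variable so that a residual sum of Kloosterman sums emerges'' is a placeholder for the missing idea, not an argument.

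To repair the proposal you would need to import the actual engine: split into dyadic blocks, apply H\"older in the bilinear variables $(n,m)$ to isolate $\sum_h v(h)|\sum_r z_r S(h,r,q)|^k$, bound $\sum v(h)^2$ via $NM<q$, and control the $2k$-th moment of the inner sum by separating tuples with and without a distinct component and invoking \cite[Proposition 3.2]{fgkm}. The specialization to $k=4$ is what yields the exponent $\frac{1}{16}$ and hence the improvement over Chinta.
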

 Thus to show that $S_1=o(1)$ and $S_2=o(1)$, we need
 \begin{align}
 \label{constraints} \gamma-\frac{b}{2}<0,    \gamma+\frac{c}{2}-1<0,   a+1<c<2,    \frac{3b}{4}-\frac{3a}{8}-\frac{1}{16}+\gamma <0, \frac{3b}{4}-\frac{3a}{8}+\frac{\gamma}{2} <0,  0 < 2b < a <1.
 \end{align}
 for $0<\gamma< 1$ as large as possible. One can check that for any $\gamma < \frac{7}{52}-\varepsilon$, we can satisfy \eqref{constraints} by choosing 
 \begin{align}
 \label{choices} b = 2(\tfrac{7}{52}), \ \ c=2 -2(\tfrac{7}{52}), \text{ and } \ \ a=1-2(\tfrac{7}{52})-\varepsilon,
 \end{align} 
where the two occurrences of $\varepsilon$ in \eqref{choices} represent the same small constant. A computer search verifies that $\frac{7}{52}$ is the optimal value (up to $\varepsilon$) of $\gamma$. We include the Mathematica code for this below. The condition $\gamma < \frac{7}{52}-\varepsilon$ translates to the order of $\chi$ being larger than $q^{\frac{45}{52}+\varepsilon}.$ This proves Theorem \ref{thm:modforms}.

 \begin{verbatim}

In[1]:= Maximize[{gamma, gamma - (b/2) <= 0 &&  gamma + (c/2) - 1 <= 0 && 
        a + 1 <= c <= 2 &&  (3b/4) - (3a/8) - (1/16) + gamma <= 0 && 
        (3b/4) - (3a/8) + (gamma/2) <= 0 && 0 <= 2b <= a <= 1}, {gamma, a, b, c}]

Out[1]= {7/52, {gamma -> 7/52, a -> 19/26, b -> 7/26, c -> 45/26}}

\end{verbatim}

\section{Proof of Proposition \ref{S2-result}}

The following two lemmas are extensions of \cite[Lemmas 3.1, 3.2]{KN}.
\begin{mylemma}
\label{lem:31mod}
   Let $q$ be a prime and $k$ a positive integer. Let $L_j = \sum_{1\leq r \leq R} |z_r|^j$ denote the $j^{\textrm{th}}$ moment of the sequence $(z_r)_{1 \leq r \leq R}$. Suppose that
   \begin{align}
   \label{moments-condition}
   L_j \le L_1, \ \ \ L_1\ge 1. 
   \end{align} For $R<q$, we have   
    \begin{align}
\label{sumofproduct}        \sum_{1 \leq r_1, r_2, \cdots, r_{2k} \leq R} \ |z_{r_1}| & \cdots |z_{r_{2k}}| \Big| \sums_{h \bmod{q}}  S(h,{r_1},q) \cdots S(h,{r_{2k}},q)\Big| \ll L_1^{k}q^{k+1} + L_1^{2k} q^{k+\frac12},
    \end{align}
    where $\sums$ restricts to the non-zero residue classes.
    
\end{mylemma}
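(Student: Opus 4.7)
The plan is to first apply orthogonality on $h \bmod q$. Opening each $S(h,r_j,q) = \sum_{x \not\equiv 0} e((hx + r_j \bar{x})/q)$ and carrying out the full $h$-sum before removing the $h \equiv 0$ term (which contributes $(-1)^{2k} = 1$ since $(r_j, q) = 1$), one obtains
\begin{equation*}
\sums_{h \bmod q} \prod_{j=1}^{2k} S(h, r_j, q) \;=\; q A(\mathbf{r}) - 1, \quad A(\mathbf{r}) := \sum_{\substack{x_1, \ldots, x_{2k} \in (\mathbb{Z}/q\mathbb{Z})^* \\ x_1 + \cdots + x_{2k} \equiv 0 \bmod q}} \prod_{j=1}^{2k} e\!\left(\tfrac{r_j \bar{x}_j}{q}\right).
\end{equation*}
The $-1$ contributes at most $L_1^{2k} \le L_1^{2k} q^{k+1/2}$ to the left-hand side of \eqref{sumofproduct}, which is absorbed into the second term of the target; it therefore suffices to bound $q \sum_\mathbf{r} \prod_j |z_{r_j}| \, |A(\mathbf{r})|$.

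Next, I would decompose the $\mathbf{r}$-sum by the equality partition $\pi$ of $\{1, \ldots, 2k\}$ (with $r_i = r_j \iff i \sim_\pi j$). For a partition with $\ell$ blocks of sizes $b_1, \ldots, b_\ell$ (so $\sum b_i = 2k$), the weighted sum over distinct $(s_1, \ldots, s_\ell)$ is bounded by $\prod_i L_{b_i} \le L_1^\ell$ using the hypothesis $L_j \le L_1$, and the inner moment $|\sums_h \prod_i S(h, s_i, q)^{b_i}|$ is controlled as follows. When every $b_i$ is even, the inner integrand is non-negative, and the Weil bound $|S(h,s,q)| \le 2\sqrt q$ yields the trivial estimate $\ll q^{k+1}$; the extremal case here is the perfect pairing $\ell = k$, $b_i \equiv 2$, whose weight is $L_2^k \le L_1^k$, contributing $L_1^k q^{k+1}$, while coarser all-even partitions have $\ell < k$ and give smaller $L_1^\ell q^{k+1}$. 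When at least one $b_i$ is odd, the odd Kloosterman factor provides square-root cancellation in the $h$-sum, yielding a Deligne--Katz-type bound $\ll q^{k+1/2}$ for generic $\mathbf{s}$, which summed with weight $L_1^\ell \le L_1^{2k}$ contributes $L_1^{2k} q^{k+1/2}$. Assembling both cases gives the target bound.

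The main obstacle is extracting the $\sqrt q$ saving in the odd-multiplicity case, where a naive Weil estimate falls short by exactly that factor. One concrete route, following \cite[Lemma 3.1]{KN}, is to apply Cauchy--Schwarz on the $\mathbf{r}$-sum and reduce to bounding $\sum_{h_1, h_2 \bmod q} Q(h_1, h_2)^{2k}$ where $Q(h_1, h_2) := \sum_r |z_r| S(h_1, r, q) S(h_2, r, q)$, using the exact orthogonality identity $\sum_h S(h, r, q) S(h, r', q) = q(q-1)\mathbf{1}[r \equiv r'] - q\,\mathbf{1}[r \not\equiv r']$. On the diagonal $h_1 \equiv h_2$, the non-negativity of $Q(h,h)$ combined with $\sum_h Q(h,h) \ll q^2 L_1$ and the pointwise bound $Q(h,h) \le 4qL_1$ handles the $L_1^{2k} q^{k+1/2}$ term, while the second moment $\sum_{h_1,h_2} Q^2 \ll q^4 L_1 + q^2 L_1^2$ combined with a refined off-diagonal pointwise bound on $|Q(h_1, h_2)|$ (using Cauchy--Schwarz in $r$) is what produces the $L_1^k q^{k+1}$ term. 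Controlling this off-diagonal cancellation with the correct dependence on $L_1$ is the technical crux of the argument.
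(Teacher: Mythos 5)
Your high-level strategy --- decompose the $\mathbf{r}$-sum by the multiplicity pattern, handle the ``paired'' tuples with Weil and a trivial $h$-sum, and invoke square-root cancellation in the complete $h$-sum for the remaining tuples --- is the right skeleton, and in that sense you have identified where the difficulty lies. However, both your choice of split and, more importantly, your treatment of the cancellation step differ from the paper, and the latter is a genuine gap.

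The paper splits not by parity of block sizes but by whether some $r_i$ is a singleton: it sets $\mathfrak{D}$ to be the tuples in which every component is repeated (all multiplicities $\geq 2$). For $\mathbf{r}\in\mathfrak{D}$ there are at most $k$ blocks, so the weight is $\leq L_1^k$ by hypothesis \eqref{moments-condition}, and the $h$-sum is bounded by $q\cdot(2\sqrt{q})^{2k}\ll q^{k+1}$ via Weil, giving $L_1^k q^{k+1}$. For $\mathbf{r}\notin\mathfrak{D}$ (some $r_i$ is distinct from all others) the weight is bounded trivially by $L_1^{2k}$, and the essential input is the estimate
$\sums_{h\bmod q}S(h,r_1,q)\cdots S(h,r_{2k},q)\ll q^{k+\frac12}$, cited from Fouvry--Ganguly--Kowalski--Michel \cite[Proposition~3.2]{fgkm}, which applies precisely when some $r_i$ is unrepeated. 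That one citation closes the hard case.

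Your ``some $b_i$ odd'' refinement would also suffice in principle --- a Katz-type equidistribution result does give $q^{k+\frac12}$ in that generality --- but you do not supply a usable reference or proof, and the hedge ``for generic $\mathbf{s}$'' is not an argument. The ``concrete route'' you fall back on does not close as stated: after Cauchy--Schwarz in $\mathbf{r}$, you need $\bigl(\sum_{h_1,h_2}^{*}Q(h_1,h_2)^{2k}\bigr)^{1/2}\cdot L_1^{k}$ to be within the target, and chaining the pointwise bound $|Q|\ll qL_1$ with $\sum_{h_1,h_2}Q^2\ll q^4L_1+q^2L_1^2$ produces an off-diagonal term of size $q^{k+1}L_1^{2k-\frac12}$, which overshoots $L_1^{k}q^{k+1}$ by a factor of $L_1^{k-\frac12}\geq1$. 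You acknowledge this as ``the technical crux,'' so as written the proposal does not prove the lemma; the missing ingredient is precisely the FGKM complete-sum bound for Kloosterman correlation sums with a distinguished parameter.
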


\begin{proof}
Let $\mathfrak{D}$ be the set consisting of all tuples $(r_1,r_2,\cdots,r_{2k})$ such that no component $r_i$ is distinct from the others (that is, for each $i$ there is a $j \neq i$ with $r_i=r_j$). We split the sum on the left side accordingly as
\[
\sum_{(r_1,\cdots,r_{2k}) \in \mathfrak{D}} + \sum_{(r_1,\cdots,r_{2k}) \not \in \mathfrak{D}}.
\]
For the sum over tuples in $\mathfrak{D}$, we use the Weil bound to bound the product of Kloosterman sums and trivially bound the $h$-sum. Thus
\[
\sum_{(r_1,\cdots,r_{2k}) \in \mathfrak{D}} \ll q^{k+1} \sum_{(r_1,\cdots,r_{2k}) \in \mathfrak{D}}  |z_{r_1}|\cdots |z_{r_{2k}}|.
\]
Grouping together identical coefficients $|z_{r_i}|$ and observing that each group has at least two elements, we have
\[
\sum_{(r_1,\cdots,r_{2k}) \in \mathfrak{D}} |z_{r_1}| \cdots |z_{r_{2k}}| \ll \sum_{\substack{  j_1, \ldots, j_\ell \ge 2\\ j_1+\ldots +j_\ell =2k}} L_{j_1}\cdots L_{j_\ell} \ll L_1^\ell \ll L_1^{k},
\]
where we used \eqref{moments-condition}.

For the sum over tuples not in $\mathfrak{D}$, we have 
\[
\sums_{h \bmod{q}}  S(h,{r_1},q) \cdots S(h,{r_{2k}},q)\ll q^{k+\half}
\]
by \cite[Proposition 3.2]{fgkm} (which reflects square-root cancellation once the Kloosterman sums are opened up). Thus
\[
\sum_{(r_1,\cdots,r_{2k}) \notin \mathfrak{D}} \ll q^{k+1} \sum_{(r_1,\cdots,r_{2k}) \notin \mathfrak{D}}  |z_{r_1}|\cdots |z_{r_{2k}}|
\]
Grouping together identical coefficients $|z_{r_i}|$ and observing that groups may have only one element, we get \[
\sum_{(r_1,\cdots,r_{2k}) \notin \mathfrak{D}} |z_{r_1}|\cdots |z_{r_{2k}}| \ll  L_1^{2k}.
\qedhere
\]
\end{proof}

\begin{mylemma}
\label{lem:32mod}
    Let $q$ be a prime and let
    \begin{equation*}
        \mathcal{S} = \sum_{\substack{ n\leq N, m\le M, r\le R }}x_{n,m} z_r S(n\ov{m},r,q),
    \end{equation*}
    where $x_{n,m} \ll q^{\varepsilon}$ and the moments of $z_r$ satisfy \eqref{moments-condition}.  Suppose that $NM < q$ and $R<q$. For any integer $k>1$ we have the upper bound
    \begin{equation*}
        \mathcal{S} \ll q^{\varepsilon} (NM)^{1-\frac{1}{2k}} (L_1q^{\half+\frac{1}{4k}} + L_1^\frac12 q^{\half + \frac{1}{2k}} ).
    \end{equation*}
    As before $L_j$ denotes the $j^{\textrm{th}}$ moment of the sequence $(z_r)_{1 \leq r \leq R}$.
\end{mylemma}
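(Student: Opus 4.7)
The strategy is to apply Hölder's inequality in the $(n, m)$-variables, expand the resulting $2k$-th moment via the definition of $C_{n,m}$, and then invoke Lemma \ref{lem:31mod} to control the complete Kloosterman sum that emerges.

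First, write $\mathcal{S} = \sum_{n \le N, m \le M} x_{n,m}\, C_{n,m}$ with $C_{n,m} := \sum_{r \le R} z_r\, S(n\overline{m}, r, q)$. Applying Hölder's inequality with exponents $2k/(2k-1)$ and $2k$ in the $(n, m)$-sum and using the hypothesis $|x_{n,m}| \ll q^\varepsilon$ yields
\[
|\mathcal{S}|^{2k} \;\ll\; q^\varepsilon (NM)^{2k-1} \sum_{n, m} |C_{n,m}|^{2k}.
\]

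Next, expand $|C_{n,m}|^{2k}$ as $\sum_{r_1,\ldots,r_{2k}} \bigl(\prod z_{r_i}^{(\pm)}\bigr) \prod_i S(n\overline{m}, r_i, q)$, using the reality of Kloosterman sums. Interchanging the order of summation and grouping the inner $(n, m)$-sum by $h := n\overline{m} \pmod q$, one obtains $\sum_{h \ne 0} \nu(h) \prod_i S(h, r_i, q)$, where $\nu(h) := \#\{(n, m) : 1 \le n \le N,\ 1 \le m \le M,\ n \equiv hm\,(\bmod\, q)\}$. Since $NM < q$ one has $\nu(h) \le \min(N, M)$ pointwise and $\sum_h \nu(h) = NM$. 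Taking absolute values on the outer $(r_i)$-sum and invoking Lemma \ref{lem:31mod} would then control $\sum_{n,m}|C_{n,m}|^{2k}$, and extracting the $(2k)$-th root would yield the stated bound.

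The main obstacle lies in achieving the claimed power $(NM)^{1 - 1/(2k)}$ rather than the weaker $(NM)^{1 - 1/(4k)}$ that the naive pointwise bound $\nu(h) \le \min(N, M)$ produces. To absorb the spurious factor $\min(N, M)^{1/(2k)}$, I expect one applies Poisson/Fourier summation to the indicator of the $(n, m)$-box, splitting the inner double sum into a diagonal contribution of size roughly $NM/q$ times the complete $h$-sum---harmless since $NM < q$---plus off-diagonal pieces that one bounds using cancellation in Kloosterman sums twisted by an additive character, again via the techniques behind Lemma \ref{lem:31mod}. This separation is what leverages the hypothesis $NM < q$ to its full strength, trading the multiplicity bound on $\nu(h)$ for the averaged bound $\sum_h \nu(h) = NM$.
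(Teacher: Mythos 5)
Your first step already commits you to a losing position, and the repair you sketch at the end is not what the paper does and is unlikely to be made to work. By applying H\"older with exponent $2k$ in the $(n,m)$-sum, you immediately collect the factor $(NM)^{1-\frac{1}{2k}}$ and are left with $\bigl(\sum_{n,m}|C_{n,m}|^{2k}\bigr)^{1/(2k)} = \bigl(\sum_h \nu(h)|C_h|^{2k}\bigr)^{1/(2k)}$. At this point the exponent on $|C_h|$ is already $2k$, the ceiling that Lemma~\ref{lem:31mod} handles, so there is no further inequality you can apply to decouple $\nu(h)$ from $|C_h|^{2k}$ without either (i) running the $h$-sum at an exponent $>2k$, which Lemma~\ref{lem:31mod} does not give, or (ii) taking a pointwise bound on $\nu(h)$, which, as you correctly note, costs $(NM)^{1/(4k)}$. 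Your proposed escape via Poisson summation in the $(n,m)$-box is a different and substantially harder animal: you would need to control Kloosterman sums twisted by additive characters in both a complete $h$-sum and nonzero dual frequencies, which is not what Lemma~\ref{lem:31mod} (nor the reference \cite[Proposition 3.2]{fgkm} behind it) is set up to do, and you give no indication of how the pieces would recombine.

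The paper's route is cleaner and genuinely different: it applies H\"older at exponent $k$ (not $2k$), giving $(NM)^{1-\frac{1}{k}}$ and leaving $\bigl(\sum_h v(h)|C_h|^k\bigr)^{1/k}$. One then has room for a second inequality: Cauchy--Schwarz separates this into $\bigl(\sum_h v(h)^2\bigr)^{1/(2k)}\bigl(\sum_h |C_h|^{2k}\bigr)^{1/(2k)}$, which both lands exactly on the $2k$-th moment that Lemma~\ref{lem:31mod} controls and replaces the pointwise bound on the multiplicity by the $L^2$ bound $\sum_h v(h)^2\ll q^\varepsilon NM$. This last estimate is an elementary divisor count: $\sum_h v(h)^2$ counts quadruples with $n_1 m_2\equiv n_2 m_1\pmod q$, and since $NM<q$ the congruence forces equality $n_1m_2=n_2m_1$, whence the bound. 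The product $(NM)^{1-\frac1k}\cdot(NM)^{\frac{1}{2k}}=(NM)^{1-\frac{1}{2k}}$ recovers the stated exponent with no loss. So the missing ingredient in your argument is this two-step H\"older-then-Cauchy--Schwarz factorization combined with the $L^2$ multiplicity bound, not Poisson summation.
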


\begin{proof}

Using Holder's inequality with the exponents $(\frac{k}{k-1},k)$, we have

\begin{align*}
    \mathcal{S} &\ll \Big( \sum_{n\le N, m\le M}|x_{n,m}|^{\frac{k}{k-1}}\Big)^{\frac{k-1}{k}}  \Big(\sum_{n\le N, m\le M }\Big|\sum_{1\leq r \leq R} z_rS(n\ov{m},r,q)\Big|^k \Big)^{\frac{1}{k}}\\
    &\ll  q^\varepsilon (NM)^{1-\frac{1}{k}}  \Big(\sums_{h \bmod q } v(h) \Big|\sum_{1\leq r \leq R} z_r S(h,r,q)\Big|^k \Big)^{\frac{1}{k}},
\end{align*}
where
 \begin{equation*}
    v(h) = \sum_{\substack{N \le N, m\le M}} \delta(n\ov{m}\equiv h \bmod q),
 \end{equation*}
and $\delta(P)=1$ if the statement $P$ is true and  $\delta(P)=0$ otherwise. By Cauchy's inequality, we have
\begin{equation}
\label{eq:mathcals3}
    \mathcal{S} \ll q^\varepsilon (NM)^{1-\frac{1}{k}}\Big( \sums_{h \bmod q} v(h)^2\Big)^{\frac{1}{2k}} \Big(\sums_{h \bmod q }  \Big|\sum_{1\leq r \leq R} z_r S(h,r,q)\Big|^{2k} \Big)^{\frac{1}{2k}}.
\end{equation}
We have
\begin{align}
\label{vbound}    \sums_{h \bmod q} v(h)^2 &=\sum_{ \substack{n_1, n_2 \leq N \\ m_1,m_2\le M}} \delta(n_1\ov{m_1}\equiv n_2\ov{m_2} \bmod q) = \sum_{ \substack{n_1, n_2 \leq N \\ m_1,m_2\le M}} \delta(n_1m_2 \equiv n_2m_1\bmod q) \ll q^\varepsilon NM,
\end{align}
using the condition $NM<q$ to see that $n_1m_2 \equiv n_2m_1\bmod q$ implies $n_1m_2=n_2m_1$.
 For the last sum in \eqref{eq:mathcals3}, we have
 \begin{align}
  \nonumber  &\sums_{h \bmod q}\Big|\sum_{1\leq r \leq R}z_rS(h,r,q)\Big|^{2k} = \sums_{h \bmod q}\Big(\sum_{1\leq r \leq R}z_rS(h,r,q)\Big)^{k}\overline{ \Big(\sum_{1\leq r \leq R}z_rS(h,r,q)\Big)^{k}}\\
  \label{eq:hsumbound}   &\ll \sum_{1 \leq r_1, r_2, \cdots, r_{2k} \leq R} \ |z_{r_1}|\cdots |z_{r_{2k}}| \Big| \sums_{h \bmod{q}}  S(h,{r_1},q) \cdots S(h,{r_{2k}},q)\Big| \ll L_1^{k}q^{k+1} + L_1^{2k} q^{k+\frac12},
 \end{align}
 using first that the Kloosterman sums are real valued, and then Lemma \ref{lem:31mod} to get the final bound. Inserting the bounds \eqref{eq:hsumbound} and \eqref{vbound} into \eqref{eq:mathcals3} completes the proof.  \end{proof} 

We are now ready to prove Proposition \ref{S2-result}. Dividing the right hand side of \eqref{s2exp} into dyadic intervals, we have
\begin{align*}
S_2\ll \max_{\substack{1\le N \le \frac{q^{1+\varepsilon}X}{Y} \\ 1\le M\le X}} \  \frac{1}{q^{1-\varepsilon} (NM)^{\half} }\Big| \sum_{ \frac{N}{2}\le n\le N} \ \sum_{ \frac{M}{2}\le m\le M} \ \sum_{1\le r < q} x_{n,m} z_r  S(r , n \ov{m}, q) \Big|+O(q^{-100}),
\end{align*}
where
\[
x_{n,m}=\lambda_f(n)c_m \mathrm{exp}\Big(\frac{-2\pi n Y}{mq\sqrt{N}}\Big), \ \ \ z_r = {\chi}_{\mathrm{av}}(rN).
\]
The condition $2b<a$ from the statement of Proposition \ref{S2-result} ensures that $NM<q$ is satisfied. We note that \eqref{moments-condition} is satisfied because $L_j\le L_1$ since $|{\chi}_{\mathrm{av}}(rN)|\le 1$ and $L_1\ge 1$ by the lower bound in \eqref{chi-avg}. By Lemma \ref{lem:32mod} and the upper bound $L_1\ll q^\varepsilon d$ from \eqref{chi-avg}, we get 
\begin{align*}
S_2\ll \max_{\substack{1\le N \le \frac{q^{1+\varepsilon}X}{Y} \\ 1\le M\le X}} \  q^{-1+\varepsilon} (NM)^{\half -\frac{1}{2k}} (d q^{\half+\frac{1}{4k}} + d^\frac12 q^{\half + \frac{1}{2k}} ).
\end{align*}
The maximum occurs at the greatest possible values of $N$ and $M$. Thus using the notation $X=q^b, Y=q^a, d=q^\gamma$, we have
\begin{align}
\label{s2boundk} S_2 \ll q^{\varepsilon + b(1-\frac{1}{k}) - a(\half-\frac{1}{2k}) -\frac{1}{4k}+\frac{\gamma}{2}}(q^{\frac{\gamma}{2}}+q^{\frac{1}{4k}}).
\end{align}
Chinta already showed in \cite[Theorem 3]{Chinta} that $S_1=o(1)$ and $S_2=o(1)$ for $\gamma < \frac{1}{8}$. The two terms in \eqref{s2boundk} balance when $\frac{\gamma}{2}=\frac{1}{4k}$, or $k=\frac{1}{2\gamma}$, provided that $k>1$ is an integer, which for $\gamma=\frac18$ gives $k=4$. This suggests that to improve Chinta's work, we should specialize to $k=4$. Doing so gives
\begin{align*}
S_2 \ll q^{\varepsilon + \frac{3b}{4} -  \frac{3a}{8}  -\frac{1}{16}+\frac{\gamma}{2}}(q^{\frac{\gamma}{2}}+q^{\frac{1}{16}}),
\end{align*}
as required.

\begin{myremark}
The choice $k=4$ means that we are utilizing cancellation in a sum of a product of 8 Kloosterman sums, as seen in \eqref{sumofproduct}. In contrast, \cite{KN} uses cancellation in a sum of a product of 4 Kloosterman sums. If we had followed \cite{KN} verbatim, we would have recovered Chinta's result and not improved it. Thus our extension of \cite{KN} is necessary: a product of 6 Kloosterman sums would have already given an improvement, but a product of 8 Kloosterman sums yields the best result by this method.
\end{myremark}

\bibliographystyle{amsplain} 

\bibliography{bibliography}

\end{document}